\newtheorem{theorem}{Theorem}
\title{The Moore Bound for Regular Simplicial Complexes}
\author{
    Sukrit Chakraborty\thanks{Department of Mathematics, Achhruram memorial College, Jhalda, Purulia, West Bengal, India, \texttt{sukrit049@gmail.com}\\
        \textbf{Keywords:} Simplicial complex, diameter, regular complex, Moore bound.\\
        \textbf{2020 Mathematics Subject Classification:} 05E45, 05C12.}
}
\date{}
\begin{document}

\maketitle

\begin{abstract}
We derive Moore-type upper bounds for regular simplicial complexes and present logarithmic lower bounds on their diameter based on minimum degree. 
\end{abstract}

In a \(d\)-dimensional simplicial complex \(X\) over the vertex set \(\chi = \{1, 2, \dots, n\}\), where \(d+1 \leq n\), we define \(X_k\) as the set of all \(k\)-simplices on \(\chi\), and the complex \(X\) is formed as \(X := \left( \bigcup_{k=0}^{d-1} X_k \right) \cup X^d\), with \(X^d \subseteq X_d\), ensuring a complete \((d-1)\)-dimensional skeleton. The \textbf{distance} \(d_X(\sigma_1, \sigma_2)\) between two \((d-1)\)-simplices \(\sigma_1\) and \(\sigma_2\) is the length of the shortest path connecting them within the \(d\)-dimensional structure. The \textbf{eccentricity} \(r(\sigma)\) of a \((d-1)\)-simplex \(\sigma\) is the maximum distance from \(\sigma\) to any other \((d-1)\)-simplex in the complex. The \textbf{diameter} of \(X\), denoted \(D=\text{diam}(X)\), is the maximum eccentricity among all \((d-1)\)-simplices in \(X\). More details can be found in \cite{chak25}.

\begin{theorem}[Moore's Bound for simplicial complexes]
Let $r \geq 2$, $N=\binom{n}{d}$, and \( X \) be a connected, undirected simplicial complex with complete $(d-1)$ skeleton that is \( r \)-regular (i.e., every $(d-1)$-simplex has degree \( r \)) and has diameter \( D \). Then the number of $(d-1)$-simplices \( N \) satisfies the inequality:
\[
 N \le 1 + r \sum_{i=0}^{D - 1} (r - 1)^i
= 1 + r \cdot \frac{(r - 1)^D - 1}{r - 2}, \quad \text{for } r \geq 2.
\]
Consequently, the lower bound on the diameter is
\[
D \geq \frac{
\log \left( 1 + \frac{(N  - 1)((r - 1)d - 1)}{rd} \right)
}{
\log \big( (r - 1)d \big)
}.
\]
\end{theorem}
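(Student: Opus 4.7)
The plan is to perform a shell-counting BFS from a fixed base $(d-1)$-simplex, adapting the classical graph Moore argument to account for the fact that each co-face of a $(d-1)$-simplex is a $d$-simplex supplying $d$ neighbouring $(d-1)$-simplices rather than a single edge endpoint. Fix $\sigma\in X_{d-1}$ and set $V_i=\{\tau\in X_{d-1}:d_X(\sigma,\tau)=i\}$ for $0\le i\le D$. Since $X$ is connected and has diameter $D$, the shells $V_0,V_1,\dots,V_D$ partition $X_{d-1}$, so $N=\sum_{i=0}^{D}|V_i|$.

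First I would bound the initial shell. By $r$-regularity, $\sigma$ is contained in exactly $r$ distinct $d$-simplices; any two of them sharing a common $(d-1)$-face besides $\sigma$ would have their union consisting of at most $d$ vertices and would therefore coincide. Consequently, the $d$ ``other'' $(d-1)$-faces produced by each co-face of $\sigma$ are pairwise distinct, giving $|V_1|\le rd$.

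Next I would establish the recursion $|V_{i+1}|\le (r-1)d\cdot|V_i|$ for $1\le i\le D-1$. Given $\tau\in V_i$, any shortest $\sigma$-to-$\tau$ path ends with a step pairing $\tau$ with some $\tau'\in V_{i-1}$ through a shared $d$-simplex $F$; every other $(d-1)$-face of $F$ is adjacent to $\tau'$ and hence lies at distance $\le i$ from $\sigma$, so $F$ contributes nothing to $V_{i+1}$. The remaining $r-1$ co-faces of $\tau$ each supply at most $d$ candidates for $V_{i+1}$, so $|V_{i+1}|\le(r-1)d\cdot|V_i|$. Iterating gives $|V_i|\le rd\cdot((r-1)d)^{i-1}$ for $1\le i\le D$, and the geometric sum yields the Moore-type bound $N\le 1+rd\cdot\frac{((r-1)d)^{D}-1}{(r-1)d-1}$. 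Isolating $((r-1)d)^D$ and taking logarithms then delivers the stated lower bound on $D$.

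The principal subtlety is the recursive step: one must verify that for each $\tau\in V_i$ there exists a co-face $F$ of $\tau$ whose other $(d-1)$-faces all lie in $\bigcup_{j\le i}V_j$, so that only $r-1$ of the $r$ co-faces of $\tau$ can open onto genuinely new territory. This is handled by the penultimate-simplex argument above, which relies only on the triangle inequality for $d_X$ together with the co-face disjointness already invoked for $V_1$. Everything else is bookkeeping for a geometric series and a logarithm.
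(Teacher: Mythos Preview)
Your proposal is correct and follows essentially the same BFS shell-counting argument as the paper: both fix a base $(d-1)$-simplex, bound $|V_1|$ by $rd$, bound each subsequent shell by the branching factor $(r-1)d$, sum the resulting geometric series to obtain $N\le 1+rd\cdot\frac{((r-1)d)^{D}-1}{(r-1)d-1}$, and then invert and take logarithms for the diameter bound. You are, in fact, more careful than the paper in justifying both the disjointness of the $rd$ first-shell neighbours and the ``penultimate co-face'' argument that produces the factor $(r-1)d$ rather than $rd$ at later levels.
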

\begin{proof}
Let \( X \) be a \( r \)-regular, connected graph with diameter \( D \). Choose an arbitrary $(d-1)$-simplex \( \sigma_0 \in X_{d-1} \). We will count the maximum number of $(d-1)$-simplices that can be reached from \( \sigma_0 \) within \( D \) steps (facets), assuming that the graph branches out as widely as possible, i.e., with no repeated $(d-1)$-simplices or cycles. Observe that only the $(d-1)$-simplex \( \sigma_0 \) itself is reachable. So, one $(d-1)$-simplex. From \( \sigma_0 \), we can reach at most \( rd \) neighbors. Each of the \( rd \) neighbors can reach at most \( (r - 1)d \) new $(d-1)$-simplices (excluding all $(d-1)$-simplices caintained in \( \tau_0 \supset \sigma_0 \)). So we get at most \( rd(r - 1)d \) new $(d-1)$-simplices. Next, each of the previous level's $(d-1)$-simplices can again reach at most \( (r - 1)d \) new $(d-1)$-simplices. So the number of new $(d-1)$-simplices is at most \( rd[(r - 1)d]^2 \). Continuing this process, at distance \( i \) (for \( 1 \le i \le D \)), the number of new $(d-1)$-simplices is at most \( rd(r - 1)^{i - 1}d^{i - 1} \).

Thus, summing over all distances from 0 to \( D \), the total number of distinct $(d-1)$-simplices that can be reached is at most:

\[
n \le 1 + rd + rd(r - 1)d + rd[(r - 1)d]^2 + \cdots + rd(r - 1)^{D - 1}d^{D - 1}
\]

This is a geometric series with the first term \( 1 \) and ratio \( (r - 1)d \), so:

\[
n \le 1 + rd \sum_{i=0}^{D - 1} [(r - 1)d]^i = 1 + rd \cdot \frac{[(r - 1)d]^D - 1}{(r-1)d - 1}, \quad \text{for } r \geq 2.
\]

Given the inequality
\[
n \leq 1 + rd \cdot \frac{[(r - 1)d]^D - 1}{(r - 1)d - 1}, \quad \text{for } r \geq 2,
\]

Now, subtract 1 from both sides and divide both sides by \( rd \), we get
\[
\frac{n - 1}{rd} \leq \frac{[(r - 1)d]^D - 1}{(r - 1)d - 1}.
\]

Multiply both sides by \( (r - 1)d - 1 \) and adding 1 to both sides gives
\[
\frac{(N  - 1)((r - 1)d - 1)}{rd} + 1 \leq [(r - 1)d]^D.
\]

Taking the logarithm of both sides
\[
\log \left( \frac{N      - 1)((r - 1)d - 1)}{rd} + 1 \right) \leq D \cdot \log \big( (r - 1)d \big).
\]

Hence,
\[
D \geq \frac{
\log \left( 1 + \frac{(N - 1)((r - 1)d - 1)}{rd} \right)
}{
\log \big( (r - 1)d \big)
}.
\]
\end{proof}

\begin{theorem}
Let \(X = (X_{d-1}, X^d)\) be a simplicial complex with minimum degree \(\delta = k \geq 3\) and \(N:=|X_{d-1}| = \binom{n}{d}\). Then, for every $(d-1)$ simplex \(\sigma \in X_{d-1}\), the eccentricity \(r_\sigma(X)\) satisfies
\[
r_\sigma(X) = O\big(\log_{(k-1)d} N\big)
\]
\end{theorem}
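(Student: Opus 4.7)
My plan is to perform a breadth-first search from $\sigma$ and bound its eccentricity by a Moore-type counting argument parallel to Theorem 1, with the minimum-degree hypothesis $\delta \geq k$ substituted for regularity. Let $S_i = \{\sigma' \in X_{d-1} : d_X(\sigma, \sigma') = i\}$ and $D = r_\sigma(X)$; since $X$ is connected, $\sum_{i=0}^{D} |S_i| = N$, so it suffices to prove that $|S_i|$ grows quickly enough to exhaust $X_{d-1}$ in $O(\log_{(k-1)d} N)$ layers.

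The first step is to establish a multiplicative \emph{lower} bound on the BFS layers. Because every $(d-1)$-simplex has at least $k$ cofaces, and two distinct cofaces of the same face intersect only along that face (hence contribute disjoint $d$-tuples of $(d-1)$-neighbors), the BFS from $\sigma$ produces at least $kd$ simplices at distance $1$. For $i \geq 2$, each simplex in $S_{i-1}$ has at least $k - 1$ cofaces other than the one used to reach it, contributing $(k-1)d$ candidate simplices at distance $i$ per parent. Granting collision-freeness, this gives $|S_i| \geq kd\bigl[(k-1)d\bigr]^{i-1}$ by induction; summing the geometric series from $i=0$ to $D$ and inverting via the algebraic manipulation in the second half of Theorem 1's proof would then yield $D = O(\log_{(k-1)d} N)$.

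The main obstacle is the collision step. Theorem 1 uses $(k-1)d$ as an \emph{upper} bound on branching (immediate from the definition of a coface), but here the same quantity is needed as a \emph{lower} bound on the number of genuinely new neighbors, and this is not automatic: two distinct parents in $S_{i-1}$ could produce overlapping children, or a child could already lie in some earlier $S_j$. I would address this by partitioning the candidates into ``primary'' and ``repeated'' contributions and arguing that the repeats can be absorbed into the constant implicit in the $O(\cdot)$; if that proves insufficient, a local tree-like or girth hypothesis on $X$ below depth $D$ would cleanly rule out such collisions and let the induction close.
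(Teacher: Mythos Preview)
Your plan is line-for-line the paper's own argument: the paper fixes $\sigma$, asserts without further justification that ``the number of vertices reachable within distance $r$ is at least $1 + kd\sum_{i=0}^{r-1}[(k-1)d]^i$,'' bounds this sum above by $N$, and solves for $r$. No treatment of collisions appears anywhere; the paper simply takes the geometric lower bound on the layers for granted and then performs exactly the algebra you outline.

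The collision step you isolate is a real gap, not something that can be absorbed into the $O(\cdot)$ constant, and it is present in the paper's proof as well. Already for $d=1$ the statement is false: the circulant graph $C_N(1,2)$ (vertices $\mathbb{Z}/N\mathbb{Z}$, each $i$ adjacent to $i\pm 1$ and $i\pm 2$) is connected with minimum degree $4\ge 3$, yet its diameter is $\lceil N/4\rceil=\Theta(N)$, not $O(\log_{3}N)$. Its BFS layers satisfy $|S_i|\le 4$ for every $i\ge 1$, so the claimed growth $|S_i|\ge k(k-1)^{i-1}$ fails from $i=2$ onward. Hence neither your induction nor the paper's bare assertion can be salvaged without an additional hypothesis such as large girth or local tree-likeness, exactly as you suspected; the paper simply does not notice the issue.
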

\begin{proof}
Starting from vertex \(\sigma\), the number of vertices reachable within distance \(r\) is at least
\[
1 + kd + kd(k-1)d + kd(k-1)^2d^2 + \cdots + kd(k-1)^{r-1}d^{r-1} = 1 + kd \sum_{i=0}^{r-1} (k-1)^id^i.
\]

Evaluating the geometric sum, we get
\[
1 + kd \cdot \frac{(k-1)^rd^r - 1}{(k-1)d - 1}.
\]

Since this quantity is bounded above by the total number of vertices \(N=\binom{n}{d}\), we have
\[
N \geq 1 + kd \cdot \frac{(k-1)^rd^r - 1}{(k-1)d - 1},
\]
which implies
\[
[(k-1)d]^r \leq \frac{[(k-1)d-1][N - 1]}{kd} + 1 = O\left(N\right).
\]

Taking logarithms, we get
\[
r \leq \log_{(k-1)d} (C \cdot N) = O\big(\log_{(k-1)d} N\big),
\]
for some constant \(C\).

Hence, the eccentricity \(r_\sigma(X)\) of any $(d-1)$ simplex \(\sigma\) is at most on the order of \(\log_{(k-1)d} N\).
\end{proof}

\end{document}